\documentclass[12pt]{amsart}
\usepackage{amscd,amssymb}
\usepackage{url}
\usepackage{amscd,amssymb,latexsym,subfigure,hyperref}
\usepackage[graph,frame,poly,arc]{xy}  

\topmargin=0.1in
\textwidth5.95in
\textheight8.60in
\oddsidemargin=0.3in
\evensidemargin=0.3in

\theoremstyle{plain}
\newtheorem{thm}[subsection]{Theorem}
\newtheorem*{problem}{Problem}

\newtheorem{prop}[subsection]{Proposition}
\newtheorem{cor}[subsection]{Corollary}
\theoremstyle{definition}
\newtheorem{rk}[subsection]{Remark}
\newtheorem{definition}[subsection]{Definition}
\newtheorem{ex}[subsection]{Example}

\newtheorem{question}[subsection]{Question}

\numberwithin{equation}{section}
\setcounter{tocdepth}{1}

\newcommand{\CC}{{\mathcal C}}

\newcommand{\al}{{\alpha}}
\newcommand{\be}{{\beta}}

\newcommand{\C}{\mathbb{C}}
\newcommand{\PP}{\mathbb{P}}


\begin{document}
\title [On plane conic arrangements with nodes and tacnodes]
{On plane conic arrangements with nodes and tacnodes}

\author[Alexandru Dimca]{Alexandru Dimca}
\address{Universit\'e C\^ ote d'Azur, CNRS, LJAD and INRIA, France and Simion Stoilow Institute of Mathematics,
P.O. Box 1-764, RO-014700 Bucharest, Romania}
\email{dimca@unice.fr}

\author[Marek Janasz]{Marek Janasz}
\address{Department of Mathematics,
Pedagogical University of Krakow,
Podchor\c a\.zych 2,
PL-30-084 Krak\'ow, Poland}
\email{marek.janasz@up.krakow.pl}

\author[Piotr Pokora]{Piotr Pokora}
\address{Department of Mathematics,
Pedagogical University of Krakow,
Podchor\c a\.zych 2,
PL-30-084 Krak\'ow, Poland}
\email{piotr.pokora@up.krakow.pl}

\subjclass[2010]{Primary 14N20; Secondary  14C20, 32S22}

\keywords{conic arrangements, nodes, tacnodes}

\begin{abstract} In the present paper, we study arrangements of smooth plane conics having only nodes and tacnodes as the singularities. We provide an interesting estimation on the number of nodes and tacnodes that depends only on a linear function of the number of conics. Based on that result, we obtain a new upper bound on the number of tacnodes which turns out to be better than Miyaoka's bound for a large enough number of conics. We also study the freeness and nearly freeness of such arrangements providing a detailed description.
\end{abstract}
 
\maketitle

\section{Introduction}
In the present paper we study the geometry of arrangements of smooth plane conics such that the only singular points are nodes ($A_{1}$ singularities) and tacnodes ($A_{3}$ singularities). These are the simplest (interesting) singularities that an arrangement of smooth conics can produce. It is worth pointing out that even for this setting, which looks restrictive, there is an extremely fundamental question regarding these types of singularities.
\begin{problem}
Let $\mathcal{C}$ be an arrangement of $k\geq 2$ smooth conics in $\mathbb{P}^{2}_{\mathbb{C}}$ which admits only nodes and tacnodes as singularities. What is the maximal possible number of tacnodes determined by $\mathcal{C}$?
\end{problem}
If $\mathcal{C} \subset \mathbb{P}^{2}_{\mathbb{C}}$ is an arrangement of $k\geq 2$ smooth conics, then by a simple count one has
\begin{equation}
\label{eq:comb}
  4\cdot \binom{k}{2} =  n + 2t,  
\end{equation}
where $n$ is the number of nodes and $t$ is the number of tacnodes. In particular, this gives
\begin{equation}
\label{eq:naive}
t \leq k(k-1),
\end{equation}
and this bound is, for small values of $k$, even sharp. For instance, we if we take $k=2$ we see that the maximal possible number of tacnodes is $2$, and this arrangement can be realized over the reals. The discrepancy occurs for $k=5$ when the above naive bound gives $t=20$, but an interesting result (analysis) by Megyesi \cite{Meg} gives that in reality we can have at most $17$ tacnodes. In fact, we have a complete classification of the arrangements which admit exactly $17$ tacondes, and one of these arrangements is the famous Naruki's pentagram arrangement \cite{Naruki}.

For any arrangement of $k\geq 3$ smooth plane conics, Miyaoka in \cite{Miyaoka} considered the double cover of the plane branched  along  the union of  the conics  and  after  applying the Miyaoka-Yau  inequality to it, or by taking  the boundary  divisor  $B$  consisting  of  the union of  conics  with  coefficient   $\frac{1}{2}$ and then applying the log-Miyaoka-Yau inequality to the pair  $(\mathbb{P}^{2}, B)$, he obtained the following upper-bound
\begin{equation}
\label{miyaoka}
t \leq \frac{4}{9}k^{2} + \frac{4}{3}k.
\end{equation}
Observe that this bound is sharp and much better than the naive bound, for instance if we take $k=5$, then
$$t \leq \frac{100}{9} + \frac{60}{9} \approx 17.7,$$
which means that the bound is sharp. 

However, if we restrict our attention to arrangements of smooth plane conics with ordinary singularities, i.e., all the intersection points look locally as $\{x^{t} = y^{t}\}$ for some $t\geq 2$, then an interesting result due to Tang \cite{Tang} provides the following inequality. For an arrangement $\mathcal{C}$ of $k$ smooth plane conics with only ordinary singularities, we denote by $t_{r}(\mathcal{C}) = t_{r}$ the number of $r$-fold intersection points of $\mathcal{C}$. 
\begin{thm}[Tang]
Let $\mathcal{C} \subset \mathbb{P}^{2}_{\mathbb{C}}$ be an arrangement $k\geq 3$ smooth conics with only ordinary singularities and such that there is no point where all conics meet simultaneously. Then
$$t_{2} + t_{3} + 5k \geq \sum_{r\geq 5}(r-4)t_{r}.$$
\end{thm}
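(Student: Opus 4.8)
The plan is to apply the logarithmic Miyaoka–Yau inequality (or equivalently a Bogomolov–Miyaoka–Yau-type bound) to the pair consisting of $\PP^2$ together with the reduced curve $C = \bigcup_{i=1}^k C_i$, which is the standard engine behind such inequalities for plane curve arrangements with prescribed singularities. First I would set up the numerical invariants of the arrangement: the total degree of $C$ is $2k$, the number of ordinary $r$-fold points is $t_r$, and I would record the local contribution of each ordinary singularity of multiplicity $r$ to the relevant orbifold Euler characteristic and to the self-intersection of the log-canonical divisor $K_{\PP^2} + C$. The crucial combinatorial identity is the analogue of \eqref{eq:comb}: counting the pairwise intersection points of the conics with correct multiplicities gives
\begin{equation*}
4\binom{k}{2} = \sum_{r\geq 2}\binom{r}{2} t_r,
\end{equation*}
since each pair of distinct conics meets in $4$ points (by Bézout), and an ordinary $r$-fold point absorbs $\binom{r}{2}$ of these pairwise incidences. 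This identity will be used to eliminate one degree of freedom and to rewrite the MY inequality in the stated linear form.

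Next I would assemble the two ingredients that feed the inequality. The first is the orbifold (log) Euler number $e_{\mathrm{orb}}$, computed as $e(\PP^2) = 3$ minus the corrections coming from the curve and its singular points; for an ordinary $r$-fold point the local correction is controlled by the number of branches $r$ and the topology of the link, and the conics contribute through $-e(C\setminus \operatorname{Sing})$ together with $\sum_r \mu_r t_r$ for an appropriate local constant $\mu_r$. The second ingredient is $(K_{\PP^2}+C)^2$ on a suitable log resolution: blowing up each ordinary $r$-fold point once (which suffices for an ordinary singularity $\{x^r=y^r\}$) introduces an exceptional curve of self-intersection $-1$ meeting the strict transform with multiplicity $r$, and I would compute how this modifies the self-intersection and the discrepancies. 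Feeding both into the inequality $(K+C)^2 \leq 3\, e_{\mathrm{orb}}(K+C)$ (in the form appropriate to the chosen normalization) yields a quadratic-in-$k$ expression on the left together with a weighted sum $\sum_r a_r t_r$, where the coefficients $a_r$ grow like $r^2$.

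The heart of the argument is then to manipulate the resulting inequality so that the $k^2$-term on one side cancels against the combinatorial identity above, leaving only a \emph{linear} function of $k$. Substituting $4\binom{k}{2} = \sum_r \binom{r}{2} t_r = 2k^2 - 2k$ to trade the quadratic term for the weighted point-count, I expect the quadratic contributions to match exactly, so that what survives is a genuinely linear bound $\sum_r (r-4) t_r \leq t_2 + t_3 + 5k$ after collecting the coefficients of each $t_r$. The hypothesis that no point lies on all $k$ conics, i.e. $r \leq $ (something bounded away from $2k$), together with the smoothness of each conic (so each branch at a singular point is smooth and the only analytic types are the ordinary ones), is what guarantees the local computations are uniform and that the resolution behaves as described.

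The step I expect to be the main obstacle is the precise bookkeeping of the local log-canonical and Euler-number contributions at an ordinary $r$-fold point, and verifying that after substituting the Bézout identity the coefficient of $t_r$ collapses to exactly $-(r-4)$ for $r \geq 5$ while the low-multiplicity terms $t_2, t_3, t_4$ and the global $k$-term assemble into the clean constant $5k$; getting every constant right (in particular checking that $t_4$ drops out with coefficient zero and confirming the $5k$ rather than some other multiple of $k$) is where the delicate arithmetic lives, and it is also where the hypothesis $k \geq 3$ and the no-common-point condition must be invoked to ensure the inequality is not vacuous or improvable.
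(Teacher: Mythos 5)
This statement is quoted from Tang's paper \cite{Tang}; the present paper gives no proof of it, so the only internal point of comparison is the analogous argument for Theorem \ref{thm:nt}, which does use the log Miyaoka--Yau machinery you describe. Tang's own proof is of a different kind: following Hirzebruch's treatment of line arrangements, he constructs auxiliary algebraic surfaces (abelian covers of $\mathbb{P}^{2}_{\mathbb{C}}$ branched along the union of the conics) and applies the Miyaoka--Yau inequality to those surfaces; the hypothesis that no point lies on all $k$ conics enters precisely to control that construction, not merely to bound $r$ away from $2k$ as you suggest.

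As a proof, your proposal has a genuine gap: every decisive computation is deferred. You never specify the weight $\alpha$ (equivalently the normalization of the log pair), never compute the local orbifold Euler number or log-canonical contribution of an ordinary $r$-fold point, and the central claim --- that after substituting $4\binom{k}{2}=\sum_{r}\binom{r}{2}t_{r}$ the quadratic terms cancel and the coefficient of $t_{r}$ collapses to exactly $-(r-4)$ for $r\geq 5$, to $+1$ for $r=2,3$, to $0$ for $r=4$, and to $5$ for the $k$-term --- is stated as an expectation rather than verified. This is not a minor bookkeeping issue: whether the $r^{2}$-terms cancel depends on the exact value of $\alpha$ and on the precise form of $e_{orb}$ at an ordinary point, and when this route is actually carried out (as in \cite{Pokora}, the template for Theorem \ref{thm:nt}) it produces a Hirzebruch-type inequality with coefficient $\tfrac{3}{4}$ on $t_{3}$ and a different linear term in $k$, not Tang's $t_{2}+t_{3}+5k$. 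So the strategy is plausible for proving \emph{some} inequality of this shape, but nothing in the proposal establishes that it yields the stated one, and the constants you promise to recover are almost certainly not the ones this method gives.
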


In the present note we want to understand the geometry of arrangement of smooth plane conics in the plane that admit only nodes and tacnodes as singularities. Our main result provides an estimation on the difference between the number of nodes and tacnodes of such arrangements, and according to our knowledge this result is the first one in this direction.

\begin{thm}
\label{thm:nt}
Let $\mathcal{C} = \{C_{1}, ..., C_{k}\} \subset \mathbb{P}^{2}_{\mathbb{C}}$ be an arrangement of $k\geq 6$ smooth conics which admits only $n$ nodes and $t$ tacnodes. Then we have
$$t  \leq  \frac{1}{4}n + 5k.$$
\end{thm}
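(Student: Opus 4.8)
The plan is to feed the arrangement into an orbifold Bogomolov--Miyaoka--Yau inequality and then to play the resulting quadratic estimate against the elementary count \eqref{eq:comb}. Concretely, since $\deg \mathcal{C} = 2k$ is even, one natural route is to form the double cover $f\colon Y \to \PP^2_{\C}$ branched along $\mathcal{C}$. Because every node ($A_1$) and every tacnode ($A_3$) of $\mathcal{C}$ is a quasi-homogeneous simple singularity, $Y$ acquires exactly $n$ rational double points of type $A_1$ over the nodes and $t$ of type $A_3$ over the tacnodes, and these are its only singularities. One then records the two orbifold Chern numbers of $Y$: the self-intersection $K_Y^2 = 2(k-3)^2$, which is unchanged under the crepant resolution of the $A_1, A_3$ points and depends only on $k$, and the orbifold Euler number $e_{\mathrm{orb}}(Y)$, built from $e(Y) = 2e(\PP^2_{\C}) - e(\mathcal{C})$ together with the local orbifold contributions at the $n+t$ singular points.

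First I would compute $e(\mathcal{C})$ from the normalization: each of the $k$ conics is a smooth rational curve, and each node and each tacnode carries two local branches, so $e(\mathcal{C}) = 2k - n - t$ and hence $e(Y) = 6 - 2k + n + t$. The decisive ingredient is the \emph{local orbifold Euler number} attached to each singular point of $Y$: the nodes contribute the standard value at an $A_1$ point, while the tacnodes require the local number at an $A_3$ point sitting under a tangential (non-ordinary) contact of two smooth branches. Assembling these into $e_{\mathrm{orb}}(Y)$ and invoking $K_Y^2 \le 3\, e_{\mathrm{orb}}(Y)$ yields a relation of the shape $2k^2 - 6k \le \alpha\, n + \beta\, t$ with explicit rational constants $\alpha, \beta$.

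To finish, I would eliminate the quadratic term using \eqref{eq:comb} in the form $2k^2 - 2k = n + 2t$, which turns the displayed relation into a purely linear inequality among $n$, $t$ and $k$, and one arrives at the claimed estimate $t \le \tfrac14 n + 5k$. The role of the hypothesis $k \ge 6$ is to guarantee that $K_Y$ (equivalently, the log-canonical divisor of the pair $(\PP^2_{\C}, \mathcal{C})$ that licenses the inequality) is big and nef, so that the Miyaoka--Yau estimate applies, and to ensure that the linear estimate extracted is the binding one.

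The crux, and the only place where real work is needed, is the tacnode computation. The naive choice of local orbifold Euler number at an $A_3$ point merely reproduces Miyaoka's bound \eqref{miyaoka}; indeed, substituting \eqref{eq:comb} into the resulting relation returns exactly $t \le \tfrac49 k^2 + \tfrac43 k$. The improvement therefore hinges on using the \emph{sharp} local invariant for a tacnodal contact, in the spirit of Langer's refinement of the orbifold Miyaoka--Yau inequality, whose smaller local contribution at the tangential singularity is precisely what upgrades the effective coefficient from $\tfrac49$ to $\tfrac13$. Establishing that this sharp local number has the required value, and checking the global positivity hypotheses that make the inequality applicable for $k \ge 6$, is where I expect the main difficulty to lie; everything else is bookkeeping with \eqref{eq:comb}.
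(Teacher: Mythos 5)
Your overall strategy (an orbifold Miyaoka--Yau inequality played against the count \eqref{eq:comb}) is the right family of ideas, but the specific vehicle you chose --- the double cover $Y\to\PP^2_{\C}$ branched along $\mathcal{C}$ --- cannot deliver the stated bound, and the step you yourself flag as ``the crux'' is exactly where the argument breaks. On $Y$ the points over the nodes and tacnodes are honest $A_1$ and $A_3$ rational double points, i.e.\ quotient singularities $\C^2/(\Z/2)$ and $\C^2/(\Z/4)$, whose local orbifold Euler numbers are rigidly $\frac12$ and $\frac14$; there is no ``sharper'' local invariant available to substitute at a tangential contact. Running your own computation with these values ($K_Y^2=2(k-3)^2$ and $e_{\mathrm{orb}}(Y)=6-2k+\frac{n}{2}+\frac{t}{4}$) gives $2k^2-6k\le \frac{3}{2}n+\frac{3}{4}t$, which after eliminating the quadratic term via \eqref{eq:comb} yields $t\le\frac{2}{5}n+\frac{16}{5}k$. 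On the constraint $n+2t=2k(k-1)$ this is exactly equivalent to Miyaoka's bound \eqref{miyaoka}, and it does not imply $t\le\frac14 n+5k$ (which on the same constraint is the strictly better $t\le\frac13 k^2+3k$ for $k\ge 16$). So the double-cover framework reproduces Miyaoka and stops there, as you half-suspected.

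The actual proof abandons the double cover entirely. One applies Langer's logarithmic Miyaoka--Yau inequality \eqref{logMY} to the pair $(\PP^2_{\C},\alpha C)$ with the boundary coefficient lowered from the double-cover value $\alpha=\frac12$ to $\alpha=\frac14$: for $0\le\alpha\le\frac14$ the local orbifold Euler numbers are $(1-\alpha)^2$ at a node and $1-2\alpha$ at a tacnode, and plugging these in at $\alpha=\frac14$, together with \eqref{eq:comb}, gives $21n+48t\le 11(2n+4t+4k)-24k$, i.e.\ $t\le\frac14 n+5k$. The upgrade of the effective coefficient comes purely from this choice of $\alpha$, not from a refined local number at $\alpha=\frac12$. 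This also explains the hypothesis $k\ge6$, which your proposal misattributes to bigness and nefness of $K_Y$ (that already holds for $k\ge4$): effectivity of $K_{\PP^2_{\C}}+\alpha C$ forces $\alpha\ge\frac{3}{2k}$, and the window $\bigl[\frac{3}{2k},\frac14\bigr]$ is nonempty exactly when $k\ge6$.
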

Applying (\ref{eq:comb}) to Theorem \ref{thm:nt} allows us to conclude the following upper bound.
\begin{cor}
Let $\mathcal{C} = \{C_{1}, ..., C_{k}\} \subset \mathbb{P}^{2}_{\mathbb{C}}$ be an arrangement of $k\geq 6$ smooth conics which admits only nodes and tacnodes as singularities. Then
\begin{equation}
\label{tac}
t \leq \frac{k^{2}}{3} + 3k.
\end{equation}
\end{cor}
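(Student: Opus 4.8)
The plan is to derive the bound purely by algebra, eliminating the number of nodes $n$ between the combinatorial identity \eqref{eq:comb} and the estimate of Theorem \ref{thm:nt}. All the geometric content is already packaged into Theorem \ref{thm:nt}, so the corollary is a formal consequence; the hypothesis $k \geq 6$ is simply inherited from the theorem.

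First I would use \eqref{eq:comb} to express the node count solely in terms of $k$ and $t$. Since $4\binom{k}{2} = 2k(k-1)$, the identity reads $n + 2t = 2k(k-1)$, hence
$$n = 2k(k-1) - 2t.$$
Substituting this expression for $n$ into the inequality $t \leq \frac{1}{4}n + 5k$ of Theorem \ref{thm:nt} gives
$$t \leq \frac{1}{4}\bigl(2k(k-1) - 2t\bigr) + 5k = \frac{k(k-1)}{2} - \frac{t}{2} + 5k.$$

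Finally I would clear denominators and collect the $t$-terms. Multiplying through by $2$ yields $2t \leq k(k-1) - t + 10k$, and moving the $t$ on the right to the left gives $3t \leq k^2 - k + 10k = k^2 + 9k$. Dividing by $3$ produces exactly
$$t \leq \frac{k^{2}}{3} + 3k,$$
as claimed. There is no genuine obstacle here: the substitution is unconditional and the inequality is linear in $n$ and $t$, so the only subtlety is the bookkeeping of signs when isolating $t$, which one must be slightly careful with since $t$ appears on both sides after substitution.
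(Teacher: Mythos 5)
Your proposal is correct and is exactly the argument the paper intends: the corollary is stated as a direct consequence of applying the identity \eqref{eq:comb} to Theorem \ref{thm:nt}, and your substitution $n = 2k(k-1) - 2t$ followed by isolating $t$ reproduces the bound with no gaps. The algebra checks out, so nothing further is needed.
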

\begin{rk}
Observe that for $k \geq 16$ our upper bound (\ref{tac}) is tighter than Miyaoka's upper bound (\ref{miyaoka}).
\end{rk}
In the second part of the note we focus on the problem of whether arrangements of smooth plane conics with nodes and tacnodes can deliver new examples of (nearly) free curves. It is well-known that if $\mathcal{L}$ is an arrangement of $k\geq 3$ lines having only double intersection points, then $\mathcal{L}$ is free if and only if $k=3$. We investigate the case of smooth conics and we provide a detailed answer, in Section 3, on this question by presenting the following results.

\begin{prop}
\label{prop1}
Let $\CC \subset \mathbb{P}^{2}_{\mathbb{C}}$ be an arrangement of $k \geq 2$ smooth conics with only nodes and tacnodes as singularities. Then $\CC$  is not free.
\end{prop}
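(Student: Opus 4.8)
The plan is to use the standard numerical characterization of free plane curves in terms of the global Tjurina number and the exponents, and then collide it with the elementary bound (\ref{eq:naive}). Recall that if a reduced plane curve of degree $d$ is free, then its module of logarithmic derivations is free with a pair of exponents $(d_1,d_2)$, where $d_1,d_2$ are non-negative integers with $d_1+d_2=d-1$, and the total Tjurina number is forced to be
$$\tau = (d-1)^2 - d_1 d_2.$$
I would take this (du Plessis--Wall type) identity as the only external input.

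First I would compute $\tau$ for our arrangement. The underlying curve $\CC$ has degree $d=2k$. Since a node is an $A_1$ singularity with local Tjurina number $1$ and a tacnode is an $A_3$ singularity with local Tjurina number $3$, the total Tjurina number is $\tau = n + 3t$. Substituting the count (\ref{eq:comb}), namely $n+2t = 2k(k-1)$, I get
$$\tau = n + 3t = 2k(k-1) + t.$$
Then I would assume for contradiction that $\CC$ is free, with exponents $(d_1,d_2)$. Here $d_1+d_2 = d-1 = 2k-1$, and because $2k-1$ is odd the product $d_1 d_2$ of non-negative integers with this fixed sum is largest for the split $\{k-1,k\}$, so $d_1 d_2 \le k(k-1)$. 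Combining with the freeness identity,
$$2k(k-1)+t = \tau = (2k-1)^2 - d_1 d_2 \ge (2k-1)^2 - k(k-1) = 3k^2 - 3k + 1,$$
which rearranges to $t \ge k^2-k+1 = k(k-1)+1$. This contradicts the naive bound (\ref{eq:naive}), $t \le k(k-1)$, which itself follows from (\ref{eq:comb}) together with $n\ge 0$. Hence $\CC$ is not free.

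I do not expect a genuine obstacle: once the Tjurina count and the free-curve identity are in place, the contradiction is a one-line inequality that holds uniformly for all $k\ge 2$. The only points deserving a word of care are the justification of the free-curve Tjurina formula and the maximization $d_1 d_2 \le k(k-1)$, i.e. that the exponents are integers summing to the odd number $2k-1$. The degenerate possibility $d_1=0$ needs no separate treatment, since it only makes $d_1 d_2 = 0$ and hence $\tau$ larger, strengthening rather than escaping the contradiction.
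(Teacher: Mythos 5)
Your proof is correct and is essentially the paper's argument: both rest on the du Plessis--Wall numerical characterization of freeness (your identity $\tau=(d-1)^2-d_1d_2$ with $d_1+d_2=d-1$ is the paper's $(2k-1)^2-r(2k-1)+r^2=\tau$ with $r=d_1$), the count $\tau=n+3t=2k(k-1)+t$, and the bound $t\leq k(k-1)$. The only cosmetic difference is that the paper derives the contradiction by showing the quadratic in $r$ has negative discriminant, while you maximize $d_1d_2$ over the admissible splittings; these amount to the same inequality.
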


\begin{prop}
\label{prop2}
Let $\CC \subset \mathbb{P}^{2}_{\mathbb{C}}$ be an arrangement of $k \geq 2$ smooth conics with only nodes and tacnodes as singularities. Then $\CC$ is nearly  free if and only if
$$ k \leq 4 \text{ and } t=k(k-1).$$
In the case when $\CC$ is nearly free, then the exponents are either $(k-1,k+1)$ or $(k,k)$.
\end{prop}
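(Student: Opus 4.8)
The plan is to convert both the Tjurina data of $\CC$ and the nearly-freeness condition into a single numerical identity, and then to squeeze the number of tacnodes between two bounds. Write $f=q_{1}\cdots q_{k}$ for a defining polynomial, so that $d:=\deg f=2k$. A node is an $A_{1}$-singularity and a tacnode an $A_{3}$-singularity, with local Tjurina numbers $1$ and $3$; hence, using the count (\ref{eq:comb}) in the form $n=2k(k-1)-2t$,
$$\tau(\CC)=n+3t=2k(k-1)+t.$$
Throughout I use the standard numerical characterization: a reduced plane curve of degree $d$ is nearly free with exponents $(d_{1},d_{2})$, $d_{1}\le d_{2}$ and $d_{1}+d_{2}=d$, if and only if $r:=\mathrm{mdr}(f)=d_{1}$ and $\tau=(d-1)^{2}-d_{1}(d_{2}-1)-1$.

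For the implication ``$\Rightarrow$'', suppose $\CC$ is nearly free with exponents $(d_{1},d_{2})$, $1\le d_{1}\le k$, $d_{1}+d_{2}=2k$. Equating the two expressions for $\tau(\CC)$ yields
$$t=2k(k-1)-d_{1}(2k-1-d_{1}).$$
The integer function $d_{1}\mapsto d_{1}(2k-1-d_{1})$ attains its maximum on $\{1,\dots,k\}$ at $d_{1}\in\{k-1,k\}$, where its value is $k(k-1)$; therefore $t\ge k(k-1)$. Combined with the naive bound (\ref{eq:naive}) this forces $t=k(k-1)$ and $d_{1}\in\{k-1,k\}$, so the exponents are $(k-1,k+1)$ or $(k,k)$, as claimed. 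Finally $t=k(k-1)$ must satisfy Miyaoka's bound (\ref{miyaoka}): the inequality $k(k-1)\le\frac{4}{9}k^{2}+\frac{4}{3}k$ reduces to $5k^{2}-21k\le 0$, i.e. $k\le 4$.

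For the converse, assume $k\le 4$ and $t=k(k-1)$. The Tjurina computation then gives $\tau(\CC)=3k(k-1)$ and hence $(d-1)^{2}-\tau(\CC)=k^{2}-k+1$. That $\CC$ is not free follows from Proposition \ref{prop1}; concretely, a free structure would require $d_{1},d_{2}$ with $d_{1}+d_{2}=2k-1$ to be the roots of $X^{2}-(2k-1)X+(k^{2}-k+1)$, whose discriminant equals $-3$. It therefore suffices to show that $r=\mathrm{mdr}(f)\in\{k-1,k\}$: in that case $r(d-1-r)=k(k-1)=(d-1)^{2}-\tau(\CC)-1$, so the nearly-free identity holds and $\CC$ is nearly free with exponents $(k-1,k+1)$ or $(k,k)$. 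Equivalently, one must prove that $\CC$ carries no Jacobian relation of degree $<k-1$. Note that the du Plessis--Wall upper bound $\tau(\CC)\le (d-1)^{2}-r(d-1-r)$, valid for $r\le(d-1)/2$, is satisfied for every $r\le k-1$ and so does not by itself exclude a smaller minimal relation; a direct syzygy argument is needed. Since only $k=2,3,4$ survive, this last point can be settled by analysing the corresponding (pairwise bitangent) configurations directly.

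The principal difficulty is exactly this last step of the converse: the determination of the minimal degree of a Jacobian relation. The global Tjurina number already places $\CC$ on the nearly-free line, but certifying that $r$ equals $k-1$ or $k$ --- rather than some smaller value, which would make $\CC$ a more degenerate (non-free, non-nearly-free) curve --- cannot be read off from $\tau(\CC)$ alone and requires the finer syzygy analysis, carried out here over the finitely many admissible cases $k\le 4$.
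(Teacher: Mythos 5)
Your forward implication is correct and is essentially the paper's argument in a different guise: where the paper writes the nearly-free condition as the quadratic $(2k-1)^2-r(2k-1)+r^2-1=\tau(\CC)$ and observes that its discriminant $4(t-k(k-1))+1$ is nonnegative only when $t=k(k-1)$, you solve for $t=2k(k-1)-d_1(2k-1-d_1)$ and maximize $d_1(2k-1-d_1)$ over the admissible exponents. These are the same computation, and both then invoke Miyaoka's bound (\ref{miyaoka}) to force $k\le 4$ and read off the exponents $(k-1,k+1)$ or $(k,k)$. No issue there.

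The converse, however, has a genuine gap, and you have located it yourself: everything reduces to showing that $r=\mathrm{mdr}(f)\in\{k-1,k\}$ for every arrangement with $k\le 4$ and $t=k(k-1)$, and you correctly observe that the du Plessis--Wall inequality cannot rule out a smaller $r$ (a smaller $r$ only weakens the upper bound on $\tau$), so the global Tjurina number alone does not certify nearly-freeness. But then you stop at ``this can be settled by analysing the corresponding (pairwise bitangent) configurations directly'' without doing the analysis. That analysis is precisely the content of the paper's proof of this direction: it invokes Megyesi's classification of conic arrangements attaining the maximal number of tacnodes for $k=2,3,4$ (the one-parameter families $x^2+y^2-z^2$, $x^2/r^2+y^2-z^2$, $x^2+y^2-r^2z^2$ and the explicit four-conic configuration from \cite{MPTG}), exhibits an explicit degree-one syzygy $z\,\partial_yQ+y\,\partial_zQ=0$ for $k=2$, and computes $\mathrm{mdr}$ (over $\mathbb{Q}(r)$, with \textsc{Singular}) for $k=3,4$, checking in each case that the nearly-free numerical identity holds. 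Without either reproducing that classification-plus-computation or supplying an alternative argument bounding $\mathrm{mdr}(f)$ from below, the ``if'' direction of the proposition is asserted rather than proved. A secondary, smaller point: your reduction also tacitly needs that every arrangement with $t=k(k-1)$ falls into the classified families, which is exactly what Megyesi's results supply and should be cited rather than assumed.
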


In the case of arrangements of higher degree curves an important role is played by the topological-analytical properties of singularities. In Section 4, we study quasi-homogeneous singularities which are ordinary and have an arbitrary multiplicity. This setting is interesting due to the fact that these two notions, in general, go apart rapidly.  We also refer to Section 4 for definitions related to singularities of plane curves (if the reader is not familiar with them).

\section{A combinatorial bound on conics with nodes and tacnodes}
Here we present all techniques and details regarding Theorem \ref{thm:nt}. As it was mentioned in the introduction, one way to obtain a reasonable upper bound on the number of tacnodes is to perform a double cover and use the Miyaoka-Yau inequality. We will somehow follow this path of research by using the logarithmic version of the Miyaoka-Yau inequality provided by Langer \cite{Langer}. This inequality seems to be more efficient and easier to use since we do not need to perform a complicated construction of the covering. On the other side, in order to apply the logarithmic Miyaoka-Yau inequality we need to know the local orbifold numbers of singularities. We do not aim to provide a detailed description here (due to a highly technical nature), but we roughly explain that if $p$ is a singular point, then the local orbifold Euler number, analytic in its nature, informs us about the order of the local orbifold fundamental group around this singularity. We are going to use the following version of Langer's result \cite[Section 11.1]{Langer}.
\begin{thm}[Langer]
Let $C \subset \mathbb{P}^{2}_{\mathbb{C}}$ be a reduced curve of degree $d$ and consider a log canonical pair $(\mathbb{P}^{2}_{\mathbb{C}},\alpha C)$ for a suitably chosen $\alpha$, then we have the following inequality:
\begin{equation}
\label{logMY}
\sum_{p \in {\rm Sing}(C)}  3\bigg( \alpha (\mu_{p} - 1) + 1 - e_{orb}(p,\mathbb{P}^{2}_{\mathbb{C}}, \alpha C) \bigg) \leq (3\alpha - \alpha^{2})d^{2} - 3\alpha d,
\end{equation}
where $\mu_{p}$ is the Milnor number of a singular point $p \in {\rm Sing}(C)$ and $e_{orb}(p,\mathbb{P}^{2}_{\mathbb{C}}, \alpha C)$ is the local orbifold Euler number of $p$.
\end{thm}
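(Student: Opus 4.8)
The inequality is Langer's orbifold Bogomolov--Miyaoka--Yau inequality specialized to $\mathbb{P}^2_{\mathbb{C}}$, and the plan is to deduce it by computing the two orbifold Chern numbers of the pair $(\mathbb{P}^2_{\mathbb{C}}, \alpha C)$ and then invoking the general orbifold bound $\bar c_1^2 \le 3\bar c_2$. First I would fix the orbifold (parabolic) structure attached to the log canonical pair; here the hypothesis that $\alpha$ is ``suitably chosen'' means that $\alpha$ does not exceed the global log canonical threshold of $C$, so that the pair is log canonical and the orbifold cotangent sheaf $\Omega^1_{(\mathbb{P}^2_{\mathbb{C}},\alpha C)}$ is a well-defined object to which the theory applies. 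With this in place, the whole statement becomes a bookkeeping identity converting $3\bar c_2 - \bar c_1^2 \ge 0$ into the asserted form.

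Next I would carry out the two Chern-number computations. Since $\bar c_1 = K_{\mathbb{P}^2_{\mathbb{C}}} + \alpha C = (\alpha d - 3)H$ with $H$ the hyperplane class and $H^2 = 1$, one gets $\bar c_1^2 = (\alpha d - 3)^2 = \alpha^2 d^2 - 6\alpha d + 9$. For the orbifold second Chern number $\bar c_2 = e_{orb}(\mathbb{P}^2_{\mathbb{C}}, \alpha C)$ I would use the stratification of $\mathbb{P}^2_{\mathbb{C}}$ by $C$: the open stratum together with the smooth locus of $C$ yields the global term $3 + \alpha d(d-3)$, namely $e(\mathbb{P}^2_{\mathbb{C}})$ corrected by $\alpha$ times minus the Euler number of a smooth degree-$d$ curve, while each singular point $p$ contributes a local correction. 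The definition of the local orbifold Euler number is arranged precisely so that this correction equals $-\bigl(\alpha(\mu_p-1) + 1 - e_{orb}(p)\bigr)$. Substituting both expressions into $\bar c_1^2 \le 3\bar c_2$ and rearranging isolates $\sum_p 3\bigl(\alpha(\mu_p-1)+1-e_{orb}(p)\bigr)$ on the left and leaves $(3\alpha - \alpha^2)d^2 - 3\alpha d$ on the right, which is exactly \eqref{logMY}.

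The genuine content, and the main obstacle, is the orbifold inequality $\bar c_1^2 \le 3\bar c_2$ itself. I would establish it along the Miyaoka route adapted to the orbifold/parabolic category: show that, under the log canonical and positivity hypotheses, the orbifold cotangent sheaf is slope-semistable with respect to the orbifold canonical polarization $K_{\mathbb{P}^2_{\mathbb{C}}} + \alpha C$; invoke the Bogomolov inequality for semistable parabolic sheaves, which gives the weaker discriminant bound $\bar c_1^2 \le 4\bar c_2$; and then improve the constant from $4$ to $3$ via Miyaoka's refinement (a restriction argument on a general member of a large multiple of the polarization, equivalently the vanishing forced by semistability of symmetric powers). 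Each of these three inputs---constructing the parabolic cotangent sheaf correctly, proving its semistability, and the Bogomolov-type estimate for parabolic sheaves through a Mehta--Ramanathan restriction theorem---is technically heavy, and this is really where Langer's work resides; I would therefore cite \cite{Langer} for this step rather than reprove it.

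Finally I would pin down the two ingredients that render the statement usable downstream: the local orbifold Euler numbers $e_{orb}(p)$, which are analytic invariants encoding the order of the local orbifold fundamental group and must be determined for each relevant singularity type, and the admissible range of $\alpha$ coming from the log canonical threshold. Because the singularities occurring in our setting are of type $A_1$ and $A_3$, hence quasi-homogeneous, I expect $e_{orb}(p)$ to admit a closed-form expression in $\mu_p$ and $\alpha$; securing such formulas is precisely what is needed to feed \eqref{logMY} into the proof of Theorem \ref{thm:nt}.
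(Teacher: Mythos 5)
Your proposal is correct in substance, but be aware that there is no proof in the paper to compare it against: the statement is quoted verbatim from \cite[Section 11.1]{Langer}, and the paper's entire ``proof'' is that citation, with the local invariants for nodes ($e_{orb}=(1-\alpha)^2$ for $0\leq\alpha\leq 1$) and tacnodes ($e_{orb}=1-2\alpha$ for $0\leq\alpha\leq\frac{1}{4}$) likewise quoted from \cite[Section 8, 9]{Langer} rather than derived. What you add beyond the paper is the bookkeeping reduction to the orbifold bound $\bar{c}_1^2\leq 3\bar{c}_2$, and that part of your sketch checks out exactly: with $\bar{c}_1^2=(\alpha d-3)^2$ and the stratified Euler number $e_{orb}(\mathbb{P}^2_{\mathbb{C}},\alpha C)=e(\mathbb{P}^2_{\mathbb{C}}\setminus C)+(1-\alpha)e(C\setminus \mathrm{Sing}(C))+\sum_{p}e_{orb}(p)$, the classical identity $e(C)=d(3-d)+\sum_{p}\mu_{p}$ converts the contribution at each singular point into $-\bigl(\alpha(\mu_{p}-1)+1-e_{orb}(p)\bigr)$, and rearranging $\bar{c}_1^2\leq 3\bar{c}_2$ yields precisely $(3\alpha-\alpha^2)d^2-3\alpha d$ on the right-hand side. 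Your placement of the genuine difficulty --- semistability of the orbifold/parabolic cotangent sheaf, the parabolic Bogomolov inequality via restriction theorems, and the Miyaoka-type improvement of the constant from $4$ to $3$ --- inside the citation of \cite{Langer} is exactly where the paper leaves it too, so deferring there is legitimate rather than a gap.

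One caveat worth fixing: you gloss ``suitably chosen $\alpha$'' as only the upper constraint that $\alpha$ not exceed the log canonical threshold. Langer's inequality also requires a positivity (effectivity) hypothesis on $K_{\mathbb{P}^2_{\mathbb{C}}}+\alpha C$, i.e.\ $\alpha d\geq 3$, which you mention only obliquely under ``positivity hypotheses'' for semistability. This lower bound is not decoration: in the application to Theorem \ref{thm:nt} it is what forces $\alpha\in\bigl[\frac{3}{2k},\frac{1}{4}\bigr]$, and the nonemptiness of that interval is the sole source of the hypothesis $k\geq 6$. Also note that the paper's constraint $\alpha\leq\frac{1}{4}$ comes from the validity range of the tacnodal formula $e_{orb}=1-2\alpha$, not from the log canonical threshold of the tacnode itself, so your final paragraph's framing of the admissible range purely through the lct should be adjusted accordingly.
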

In the context of the above result, instead of providing technical details, let us present how the mentioned numbers look like for nodes and tacnodes.
It is classically known (and follows from definitions) that if $p$ is a node, then its Milnor number is equal to $1$, and if $q$ is a tacnode, then its Milnor number is equal to $3$. Now we need to know what are the local orbifold Euler numbers for these singularities. If $p$ is a node, then Langer proved that $e_{orb}(p,\mathbb{P}^{2}_{\mathbb{C}}, \alpha C)=(1-\alpha)^2$ provided that $0 \leq \alpha \leq 1$, and if $q$ is a tacnode, then $e_{orb}(q,\mathbb{P}^{2}_{\mathbb{C}}, \alpha C) =(1-2 \alpha)$ provided that $0 \leq \alpha \leq \frac{1}{4}$. For details regarding the presented orbifold Euler numbers, we refer to \cite[Section 8, 9]{Langer}.
We are ready to show our proof of Theorem \ref{thm:nt} that goes along the lines presented in \cite{Pokora}.
\begin{proof}
Let $\mathcal{C} = \{C_{1}, ..., C_{k}\} \subset \mathbb{P}^{2}_{\mathbb{C}}$ be an arrangement of $k\geq 6$ smooth conics and such that it admits only nodes and tacnodes. Let $C := C_{1} + ... + C_{k}$ so ${\rm deg} \, C = 2k$. In order to apply the orbifold Miyaoka-Yau inequality, our pair $(\mathbb{P}^{2}_{\mathbb{C}}, \alpha C)$ must be effective and log canonical, so the pair must satisfy $\alpha \geq \frac{3}{2k}$ due to the requirement that the canonical divisor of the pair $\mathcal{O}_{\mathbb{P}^{2}_{\mathbb{C}}}(-3 + 2k\alpha)$ must be effective,  and $\alpha \leq {\rm min} \bigg\{1, \frac{1}{4}\bigg\}$ for being log-canonical. These observations lead us to $\alpha \in \bigg[\frac{3}{2k}, \frac{1}{4} \bigg]$, and this condition is non-empty provided that $k\geq 6$.

Now we are going to plug the above data into (\ref{logMY}).
We obtain
$$3n \cdot \bigg( 1 - (1-\alpha)^{2}\bigg) + 3t\cdot \bigg(2\alpha + 1 - (1-2\alpha)\bigg) \leq (3\alpha - \alpha^{2})4k^{2} - 6\alpha k,$$
which gives
$$3n(2-\alpha)+12t \leq 4(3-\alpha)k^{2}-6k.$$
Now we are going to use the combinatorial count (\ref{eq:comb}), it leads us to
$$3n(2-\alpha)+12t \leq (3-\alpha)(2n+4t+4k)-6k.$$
Substituting $\alpha=\frac{1}{4}$ in the formula above, we get
$$21n +48t \leq 11(2n+4t+4k)-24k,$$
and after some simple manipulations we land at the inequality
$$t\leq \frac{1}{4}n + 5k.$$
\end{proof}

\section{Nearly-freeness of conic arrangements with the maximal number of nodes and tacnodes}

Here we would like to understand the homological properties of smooth plane conic arrangements with the maximal possible number of tacnodes. In order to do so, we recall some crucial definitions. For a reduced curve $C \subset \mathbb{P}^{2}_{\mathbb{C}}$ of degree $d$ given by $f \in S=\mathbb{C}[x,y,z]$ we denote by $J_{f} = \langle \partial_{x}\,  f, \, \partial_{y} \, f,\partial_{z} \, f \rangle$ the Jacobian ideal and by $\mathfrak{m} = \langle x,y,z \rangle$ the irrelevant ideal. Consider the graded $S$-module $N(f) = I_{f} / J_{f}$, where $I_{f}$ is the saturation of $J_{f}$ with respect to $\mathfrak{m}$.
\begin{definition}
We say that a reduced plane curve $C$ is \emph{nearly free} if $N(f) \neq 0$ and for every $k$ one has ${\rm dim} \, N(f)_{k} \leq 1$. 
\end{definition}
\begin{definition}
We say that a reduced plane curve $C$ is \emph{free} if $N(f) = 0$. 
\end{definition}
Recall that  for a curve $C$ given by $f \in S$ we define the Milnor algebra as $M(f) = S / J_{f}$.
The description of $M(f)$ for nearly free curves comes from \cite{DimcaSticlaru} as follows.
\begin{thm}[Dimca-Sticlaru]
If $C$ is a nearly free curve of degree $d$ given by $f \in S$, then the minimal resolution of the Milnor algebra $M(f)$ has the following form:
\begin{equation*}
\begin{split}
0 \rightarrow S(-b-2(d-1))\rightarrow S(-d_{1}-(d-1))\oplus S(-d_{2}-(d-1)) \oplus S(-d_{3}-(d-1)) \\ \rightarrow S^{3}(-d+1)\rightarrow S \rightarrow M(f) \rightarrow 0
\end{split}
\end{equation*} for some integers $d_{1},d_{2},d_{3}, b$ such that $d_{1} + d_{2} = d$, $d_{2} = d_{3}$, and $b=d_{2}-d+2$. In that case, the pair $(d_{1},d_{2})$ is called the set of exponents of the nearly free curve $C$.
\end{thm}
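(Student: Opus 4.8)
The plan is to read off the minimal free resolution of the Milnor algebra $M(f)=S/J_f$ from the near-freeness of $C$, using the homological dictionary between $M(f)$, its second syzygy module $AR(f)=\Omega^2(M(f))=\{(a,b,c)\in S^3 : af_x+bf_y+cf_z=0\}$, and the defect module $N(f)$. First I would fix the two leftmost terms, which are forced on degree grounds: for a reduced curve of degree $d\ge 2$ the three partials $f_x,f_y,f_z$ are $\mathbb{C}$-linearly independent and generate $J_f$ in degree $d-1$, so the resolution necessarily begins $S^3(-(d-1))\to S\to M(f)\to 0$. To fix its length I would invoke Auslander--Buchsbaum, $\pd_S M(f)=3-\depth M(f)$, together with the identification $H^0_{\mathfrak m}(M(f))=I_f/J_f=N(f)$; since $C$ reduced forces $\operatorname{Sing}(C)$ to be zero-dimensional, $\depth M(f)=0$ holds exactly when $N(f)\neq 0$, which is part of the near-free hypothesis. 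Hence $\pd_S M(f)=3$ and the resolution is $0\to F_3\to F_2\to F_1\to F_0\to M(f)\to 0$ with $F_0=S$ and $F_1=S^3(-(d-1))$.

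Next I would determine the ranks. As $M(f)$ is a torsion module (its support is the one-dimensional cone over the finite set $\operatorname{Sing}(C)$), the alternating sum of ranks gives $\rank F_2-\rank F_3=2$. The crucial point is that $\rank F_3=1$. Via graded local duality over the three-variable ring $S$ one has $N(f)^\vee\cong \operatorname{Ext}^3_S(M(f),S)(-3)$, and for a minimal resolution the number of generators of $\operatorname{Ext}^3_S(M(f),S)$ equals $\rank F_3$; dually this number is $\dim_{\mathbb{C}}\operatorname{soc} N(f)$. The near-free hypothesis $\dim N(f)_k\le 1$ together with Sernesi's self-duality $N(f)_k\cong N(f)_{3(d-2)-k}^\vee$ forces the nonzero graded pieces of $N(f)$ to occupy a single symmetric interval of dimension one, so $N(f)$ is cyclic with one-dimensional socle; therefore $\rank F_3=1$, hence $F_3=S(-(b+2(d-1)))$ and $\rank F_2=3$, say $F_2=\bigoplus_{i=1}^3 S(-d_i-(d-1))$.

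It then remains to extract the relations $d_1+d_2=d$, $d_2=d_3$ and $b=d_2-d+2$. Writing $H_{M(f)}(T)=P(T)/(1-T)^3$ with $P(T)=1-3T^{d-1}+\sum_{i}T^{d_i+d-1}-T^{b+2(d-1)}$, the identity $P(1)=0$ records the rank count already obtained, while $\dim M(f)=1$ forces $(1-T)^2\mid P(T)$; the condition $P'(1)=0$ then yields $b=d_1+d_2+d_3-2d+2$. The remaining equalities come from self-duality: since $AR(f)$ is reflexive of rank two it satisfies $AR(f)^\vee\cong AR(f)$ up to the twist by its determinant, which together with the one-dimensional socle makes the minimal resolution self-dual and forces two of the three second-syzygy degrees to coincide, i.e. $d_2=d_3$; pinning the socle of $N(f)$ in degree $3(d-2)$ then locates the resolution so that $d_1+d_2=d$, whence $b=d_2-d+2$.

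The main obstacle is the structural input feeding the last two steps: Sernesi's self-duality of the Jacobian defect module, and the deduction that the purely numerical condition $\dim N(f)_k\le 1$ forces $N(f)$ to have a contiguous, symmetric, one-dimensional Hilbert function --- equivalently that $N(f)$ is cyclic with one-dimensional socle and that two of the generator degrees of $AR(f)$ agree. Ruling out ``gaps'' in the Hilbert function of $N(f)$, and hence the appearance of extra generators or an asymmetric Betti table, is the delicate part; once it is secured, the two outer terms are forced by degree reasons and the exponents fall out of the Hilbert-series identities $P(1)=P'(1)=0$ together with the known socle degree $3(d-2)$.
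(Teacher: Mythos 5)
First, a point of reference: the paper does not prove this statement at all --- it is quoted verbatim from \cite{DimcaSticlaru} as background, so there is no in-paper proof to compare yours against. Judged on its own, your outline assembles the right toolkit: the identification of the first two terms of the resolution, the use of Auslander--Buchsbaum together with $N(f)=H^0_{\mathfrak{m}}(M(f))$ to get $\pd_S M(f)=3$, the rank count from torsionness, and the local-duality identification of ${\rm rank}\, F_3$ with ${\rm dim}_{\mathbb C}\,{\rm soc}\, N(f)$ are all correct and are genuine ingredients of the Dimca--Sticlaru argument.

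However, the two steps that actually constitute the theorem are either missing or wrong. (a) You need that the hypothesis ``$N(f)\neq 0$ and ${\rm dim}\, N(f)_k\le 1$ for all $k$'' forces $N(f)$ to have contiguous support and one-dimensional socle. Sernesi-type self-duality $N(f)_k\cong N(f)_{3(d-2)-k}^{\vee}$ only makes the support symmetric about $3(d-2)/2$; it does not exclude gaps, and even a gap-free height-one Hilbert function does not by itself give a one-dimensional socle, since the multiplication maps between consecutive nonzero graded pieces could vanish. Closing this requires the monotonicity/weak-Lefschetz statement ${\rm dim}\, N(f)_k\le {\rm dim}\, N(f)_{k+1}$ for $k$ below the middle degree, which is a theorem in its own right and is precisely what you flag but do not supply. (b) The identities $P(1)=P'(1)=0$ give only the single relation $b=d_1+d_2+d_3-2(d-1)$; the remaining relations $d_1+d_2=d$ and $d_2=d_3$ --- the actual content of the structure theorem --- are not derived. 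Your two stated mechanisms for them do not work as written: the minimal resolution of $M(f)$ is not self-dual (its $S$-dual computes ${\rm Ext}^{\bullet}_S(M(f),S)$, whose degree-$1$ and degree-$2$ terms are nonzero in general), so ``self-duality forces two second-syzygy degrees to coincide'' is not a valid inference; and the socle of $N(f)$ does not sit in degree $3(d-2)$ --- that number is twice the center of symmetry, while the socle of $N(f)$ for a nearly free curve sits in degree $d+d_2-3$. So the proposal, while pointed in the right direction, leaves the essential part of the proof open.
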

It is somehow natural to understand the homological properties of curve arrangements with the simplest possible singularities. If we consider a line arrangement $\mathcal{L}$ in $\mathbb{P}^{2}_{\mathbb{C}}$ with $k\geq 3$ lines given by $f=0$ with only double intersection points, then it is known that $\mathcal{L}$ is actually free, i.e., $N(f) = 0$, if and only if $k=3$, so this description looks somehow very restrictive, but it is fully complete. Due to this reason, one tries to relax the notion of the freeness to something less demanding and here the nearly freeness comes into the picture. If we focus on arrangements of smooth plane conics with only nodes and tacnodes as singularities, then such arrangements cannot be free. Now we pass to our proof of Proposition \ref{prop1}.

\begin{proof}
In order to prove that result, let us use (and recall here) a result by du Plessis and Wall in \cite {duPlessisWall}, restated as Corollary 1.2 in \cite{Dimca} by the first author. Let $f = 0$ be the defining equation of $\mathcal{C}$ and we denote by $r$ the minimal degree among all the Jacobian relations, i.e., the minimal degree $r$ for the triple $(a,b,c) \in S_{r}^{3}$ such that $a \cdot \partial_{x}(f) + b \cdot \partial_{y}(f) + c\cdot \partial_{z}(f) = 0$. If $\mathcal{C}$ is free, then
$$(2k-1)^{2} -r(2k-1) + r^2 = \tau(\mathcal{C}),$$
where $\tau (\mathcal{C})$ is the total Tjurina number of $\mathcal{C}$. Since we have only nodes and tacnodes as singularities, then
$$\tau(\CC)=n+3t=2k(k-1)+t,$$
as follows from the combinatorial count (\ref{eq:comb}). If we look at the quadratic equation in $r$ obtained in this way, the corresponding discriminant is
$$\Delta_{F}=4(t-k(k-1))-3 < 0.$$
Indeed, one knows that $t \leq k(k-1)$, and this completes the proof.
\end{proof}
\begin{rk}
Using similar techniques, one can also show that if $\mathcal{C}$ is an arrangement of $k\geq 2$ smooth conics having only nodes as singularities, then $\mathcal{C}$ is never nearly free. Indeed, by \cite[Theorem 1.3]{Dimca}, we need to check whether the equation
$$(2k-1)^{2} - r(2k-1) + r^2 = \tau(\mathcal{C}) + 1 = 2(k^{2}-k) +1,$$
considered with respect to $r$, has integral roots. However, an easy inspection tells us that the arrangement $\mathcal{C}$ is neither nearly free.
\end{rk}
Now we pass to the nearly free case and we are going to prove Proposition \ref{prop2}. In the first step, we show that if $\mathcal{C}$ is nearly free, then $k\leq 4$ and $t=k(k-1)$.
\begin{proof}
Using \cite[Theorem 1.3]{Dimca}, we know that $\CC$ is nearly free if and only if
$$(2k-1)^2-r(2k-1)+r^2-1=\tau(\CC).$$
Since we have only nodes and tacnodes as singularities, we obtain
$$\tau(\CC)=n+3t=2k(k-1)+t,$$
and looking at the quadratic equation in $r$ obtained in this way, we get the discriminant 
$$\Delta_{NF}=4(t-k(k-1))+1.$$
Since $t \leq k(k-1)$, we have that $\Delta_{NF} \geq 0$ if and only if
$t=k(k-1)$. Using now the Miyaoka bound (\ref{miyaoka}) we see that if $\CC$ is nearly free, then
$$k(k-1)=t \leq \frac{4}{9}k^2+\frac{4}{3}k.$$
This inequality implies that $k \leq 4$. The claim about the exponents follows by solving the quadratic equation with respect to $r$.
\end{proof}
Now we are going to show that if $\mathcal{C}$ is an arrangement of $k\leq 4$ conics with $t=k(k-1)$ tacnodes, than $\mathcal{C}$ has to be nearly free. This is going to be done via a detailed discussion on such conic arrangements. Let $\mathcal{C}_{k} = \{C_{1}, ..., C_{k}\} \subset \mathbb{P}^{2}_{\mathbb{C}}$ be an arrangement of $k$ smooth conics admitting only $n$ nodes and $t$ tacnodes with the defining equation $f=f_{1} \cdots f_{k}$, where $C_{i} = V(f_{i})$. We will follow a classification provided by Megyesi in \cite{Meg}. Our computations here are made with an assistance of \verb{Singular{ \cite{Singular}. 

\begin{enumerate}
    \item[0.] If we take a smooth conic $\mathcal{C}_{1}$, then it is well-known that $\mathcal{C}_{1}$ is a nearly free curve having $t=0$ tacnodes.
    \item[1.] Consider $k=2$ and $\mathcal{C}_{2} = \{C_{1},  C_{2}\}$ given by
\begin{equation*}
\begin{array}{l}
    C_{1} : f_{1} = x^2 + y^2 - z^2, \\
    C_{2} : f_{2} = x^{2}/r^{2} + y^2 - z^2,
\end{array}
\end{equation*}
where $r \in \mathbb{C} \setminus \{0,\pm 1\}$. As it was shown by Megyesi, for each $r\in \mathbb{C} \setminus \{0,\pm 1\}$ such an arrangement delivers exactly $2$ tacnodes. Now we prove that $\mathcal{C}_{2}$ is nearly free. Define $Q(x,y,z) = f_{1}\cdot f_{2}$. Observe that ${\rm mdr}(Q)=1$ since we have the following identity 
$$z \cdot \frac{\partial \, Q}{\partial \, y} + y \cdot \frac{\partial \, Q}{\partial \, z} = 0.$$
We are going to apply \cite[Theorem 1.3]{Dimca}, namely
\begin{equation*}
\quad \quad \quad 1-3+9 = r^2 - r(2k-1) + (2k-1)^2 = \tau(\mathcal{C}_{2})+1 = 3\cdot t + 1 = 3\cdot 2 + 1,   
\end{equation*}
so $\mathcal{C}_{2}$ is nearly free.

    \item[2.] Consider $k=3$ and  $\mathcal{C}_{3} = \{C_{1},  C_{2}, C_{3}\}$ given by
\begin{equation*}
\begin{array}{l}
    C_{1} : f_{1} = x^2 + y^2 - z^2, \\
    C_{2} : f_{2} = x^{2}/r^{2} + y^2 - z^2,\\
    C_{3} : f_{3} = x^{2} + y^{2} - r^{2} z^{2}
    \end{array}
\end{equation*}
where $r \in \mathbb{C} \setminus \{0,\pm 1\}$. Again, as it was shown by  Megyesi, for each $r\in \mathbb{C} \setminus \{0,\pm 1\}$ such an arrangement delivers exactly $6$ tacnodes, the maximal possible number of tacnodes for $3$ conics. Using the same tick as for $\mathcal{C}_{2}$, we are going to show that $\mathcal{C}_{3}$ is nearly free. Define $Q(x,y,z) = f_{1}\cdot f_{2} \cdot f_{3}$. Using \verb{Singular{ we can perform computations over $\mathbb{Q}(r)$, where $r$ is the parameter introduced above, and one obtains that ${\rm mdr}(Q) = 3$ -- we skip rather involving computations. We use again \cite[Theorem 1.3]{Dimca}, namely
\begin{equation*}
\quad \quad \quad 9 - 15 + 25 = r^{2}-r(2k-1) + (2k-1)^{2} = \tau(\mathcal{C}_{3})+1 = 3\cdot t + 1 = 19,
\end{equation*}
so $\mathcal{C}_{3}$ is nearly free.
    \item[3.] This example was studied in \cite{MPTG}. We take $\mathcal{C}_{4} = \{C_{1}, C_{2}, C_{3}, C_{4}\}$ such that
\begin{equation*}
\begin{array}{l}
C_{1} : f_{1} = xy-z^{2},\\
C_{2} : f_{2} = xy+z^{2},\\
C_{3} : f_{3} = x^{2} + y^{2} -2z^{2},\\
C_{4} : f_{4} = x^{2} + y^{2} +2z^{2}.
\end{array}
\end{equation*}
It can be checked by a direct computations that $\mathcal{C}_{4}$ has exactly $12$ tacnodes, so for $k=4$ we obtained the maximal possible number of tacnodes according to \ref{eq:naive}. We can compute the minimal free resolution of the Milnor algebra $M(\mathcal{C}_{4})$ which has the following form:
$$0 \rightarrow S(-12) \rightarrow S^{3}(-11) \rightarrow S^{3}(-7) \rightarrow S,$$
${\rm dim} \, N(\mathcal{C}_{4})_{9} = 1$, and ${\rm dim} \,N(\mathcal{C}_{4})_{\ell} = 0$ for $\ell \neq 9$, so our arrangement $\mathcal{C}_{4}$ is nearly free. 
\end{enumerate}
\begin{rk}
Examples given above show that both cases for the exponents as in Proposition \ref{prop2} are possible. Indeed, for $k=2$ conics we get the exponents $(1,3)$, and for $k=3,4$ we have the exponents equal to $(k,k)$.
\end{rk}
\section{Quasi-homogeneous singularities}
Here we want to understand quasi-homogeneous ordinary singularities constructed via pencils of curves (and their slight generalizations). Before we present our main result, let us recall basics related to the quasi-homogeneous singularities.

\begin{definition}
Let $p$ be an  isolated singularity of a polynomial $f\in \mathbb{C}[x,y]. $
We can take local coordinates such that $p=(0,0)$.

The number 
$$\mu_{p}=\dim_\mathbb{C}\left(\mathbb{C}[x,y] /\bigg\langle \frac{\partial f}{\partial x},\frac{\partial f}{\partial y} \bigg\rangle\right)$$
is called the Milnor number of $f$ at $p$.

The number
$$\tau_{p}=\dim_\mathbb{C}\left(\mathbb{C}[ x,y] /\bigg\langle f,\frac{\partial f}{\partial x},\frac{\partial f}{\partial y}\bigg\rangle \right)$$
is called the Tjurina number of $f$ at $p$.
\end{definition}
For a projective situation,
with a point $p\in \mathbb{P}^{2}_{\mathbb{C}}$ and a homogeneous polynomial  $f\in \mathbb{C}[x,y,z]$, we take local affine coordinates such that $p=(0,0,1)$ and then the dehomogenization of $f$.

For a curve $C \, : \, f = 0$ we denote by $\tau(C)$ its total Tjurina number, i.e., 
$$\tau(C) = \sum_{p \in {\rm Sing}(C)} \tau_{p}.$$
\begin{definition}
A singularity is quasi-homogeneous if and only if there exists a holomorphic change of variables so that the defining equation becomes weighted homogeneous, $f(x,y) = \sum_{i,j}c_{i,j}x^{i}y^{j}$ is weighted homogeneous if there exist rational numbers $\alpha, \beta$ such that $\sum_{i,j} c_{i,j}x^{i\cdot \alpha} y^{j \cdot \beta}$ is homogeneous.
\end{definition}
It was proved, for the first time, by Reiffen in \cite{Reiffen} that if $f(x,y)$ is a convergent power series with an isolated singularity at the origin, then $f(x,y)$ is in the ideal generated by the partial derivatives if and only if $f$ is quasi-homogeneous. In particular, it means that in the quasi-homogeneous case one has $\tau_{p} = \mu_{p}$.

Now we are ready to present our result that allows us construct examples of ordinary quasi-homogeneous singularities. We know, based on a private communication with E. Shustin, that every ordinary singularity with multiplicity up to $4$ is quasi-homogeneous -- the argument presented to us follows from \cite{Orev}. Our motivation comes from the following result which is rather well-known -- see for instance \cite{STY, JV}. 
\begin{prop}
\label{prop3}
Let $f_t=g_1+tg_2$ be a pencil of plane curves of degree $k=\deg g_1=\deg g_2$ such that the base locus $B$ consists of $k^2$ points. Then any curve $\CC: f=0$ obtained as the union of $m$ members of this pencils has ordinary quasi-homogeneous singularities of type $(1,1;m)$ at any point in $B$. If all the members which occur in $\CC$ are smooth, then the only singularities of $\CC$ are the points of $B$.
\end{prop}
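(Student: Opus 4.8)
The plan is to split the claim into a local analytic statement at each base point and a global statement about where two distinct members of the pencil can meet. The engine for the local part is the observation that, under the hypothesis, the two reference curves $\{g_1=0\}$ and $\{g_2=0\}$ can be used as a holomorphic coordinate system near any base point, and that this coordinate system simultaneously linearizes every member of the pencil.

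First I would establish transversality at the base points. By B\'ezout's theorem $\{g_1=0\}$ and $\{g_2=0\}$ meet in $k^2$ points counted with intersection multiplicity; since $B$ consists of $k^2$ distinct points, every local intersection multiplicity must equal $1$. In particular neither curve is singular at a point $p\in B$ (a singular point of either curve would contribute at least $2$ to this count), and the differentials $dg_1, dg_2$ are linearly independent at $p$. The key step is then to take $u=g_1$ and $v=g_2$ as local holomorphic coordinates centered at $p$ — legitimate precisely because $dg_1, dg_2$ are independent there. In these coordinates every member $f_t=g_1+tg_2$ becomes the line $u+tv=0$ (with $t=\infty$ giving $v=0$), so the $m$ members making up $\CC$ are cut out near $p$ by $\prod_{i=1}^m(u+t_i v)$, a product of $m$ pairwise distinct lines through the origin. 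This local equation is homogeneous of degree $m$ in $(u,v)$; in particular it is weighted homogeneous with weights $(1,1)$, hence quasi-homogeneous by the Reiffen criterion recalled above, and it is an ordinary singularity of multiplicity $m$ — exactly type $(1,1;m)$.

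For the last assertion I would note that any two distinct members meet only along $B$: if $f_{t_i}(p')=f_{t_j}(p')=0$ with $t_i\neq t_j$, subtracting gives $g_2(p')=0$ and hence $g_1(p')=0$, so $p'\in B$. Therefore, outside $B$ the curve $\CC$ coincides locally with a single member of the pencil; if every occurring member is smooth, then $\CC$ is smooth at each point off $B$, and combined with the local description above its singular locus is exactly $B$. I expect the only genuinely technical point to be the deduction of transversality (intersection multiplicity one, and hence independence of $dg_1, dg_2$) from the numerical hypothesis that $B$ has $k^2$ points; once that is in hand, the coordinate change $u=g_1,\ v=g_2$ does all the work and the quasi-homogeneity is immediate.
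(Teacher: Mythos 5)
Your argument is correct and complete. The paper does not actually write out a proof of Proposition \ref{prop3} --- it is quoted as well known (with references to the literature), the equivalence of $|B|=k^2$ with smoothness and transversality at the base points is only remarked, and the local statement is absorbed as the ``clear'' case $c=1$ of Proposition \ref{prop4} --- but your proof supplies exactly the argument implicit there: B\'ezout forces each local intersection multiplicity to be $1$, so $(g_1,g_2)$ serve as local holomorphic coordinates in which every member of the pencil is linearized and the union of $m$ distinct members becomes an ordinary, homogeneous (hence quasi-homogeneous of type $(1,1;m)$) $m$-fold point, while off $B$ the curve is locally a single smooth member. The paper's proof of the generalization follows the same scheme, with a more elaborate normalization of $g_2$ needed only when the contact order is $c\geq 2$.
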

\begin{rk}
In the setting of the above proposition, there is precise information on $mdr(f)$, see \cite[Theorem 1.8]{D2} for details.
\end{rk}

Note that  the condition $|B|=k^2$ is equivalent to the fact that any point $b \in B$ is smooth on both $D_1:g_1=0$ and $D_2:g_2=0$ and the curves $D_1$ and $D_2$ intersect transversally at $b$. 

Now we are ready to present our generalization of Proposition \ref{prop3}.
 \begin{prop}
\label{prop4}
Let $f_t=g_1+tg_2$ be a pencil of plane curves of degree $k=\deg g_1=\deg g_2$.  For a point  $b \in B$ in the base locus $B$, assume that $D_1$ and $D_2$ are smooth at $b$ and they have a contact of order $c$, i.e., the intersection number $i(D_1,D_2)_{b}$ is equal to $c$. Then any curve $\CC: f=0$ obtained as the union of $m$ reduced members of this pencil has a quasi-homogeneous singularity of type $(c,1;cm)$ at the point $b \in B$. In particular, we have 
$$\mu_{b} = \tau_{b} = (m-1)(cm-1).$$
\end{prop}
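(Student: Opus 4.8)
The plan is to reduce the statement to a purely local computation at $b$ and then to recognise the germ of $\CC$ there as a weighted homogeneous curve singularity, for which $\mu_b=\tau_b$ follows from Reiffen's theorem recalled above and $\mu_b$ is read off from a standard formula. The point $b$ is the only place where anything happens, so I work throughout in local analytic coordinates $(x,y)$ centred at $b$.

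Since $D_1$ and $D_2$ are smooth at $b$ and meet there with $i(D_1,D_2)_b=c$, I would first put them in the contact normal form $D_1=\{y=0\}$ and $D_2=\{y=x^{c}\}$; after scaling the whole pencil by a single unit this means $g_1=y$ and $g_2=(y-x^{c})v$ for a unit $v$. A member $\alpha g_1+\beta g_2=(\alpha+\beta v)\,y-\beta v\,x^{c}$ has, for $c\geq 2$, linear part $(\alpha+\beta v(b))\,y$, so every member except the unique one with $\alpha+\beta v(b)=0$ is smooth at $b$ and tangent to the common tangent line $\{y=0\}$; two distinct smooth members $y=\phi_i(x)$ and $y=\phi_j(x)$ then meet at $b$ with intersection multiplicity ${\rm ord}_x(\phi_i-\phi_j)=c$. (For $c=1$ the curves are transverse, the members acquire distinct tangents, and one recovers the ordinary $m$-fold point of Proposition \ref{prop3}.) Thus, for the $m$ smooth members entering $\CC$, after the shift $y\mapsto y-\psi(x)$ that removes their common lower order part I may take $\phi_i=\lambda_i x^{c}+O(x^{c+1})$ with the $\lambda_i$ pairwise distinct.

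Granting this local picture, the germ at $b$ of the union of the $m$ members is semi-quasi-homogeneous with principal part
$$F=\prod_{i=1}^{m}(y-\lambda_i x^{c}),\qquad \lambda_1,\dots,\lambda_m \text{ pairwise distinct,}$$
so, since a semi-quasi-homogeneous germ is right-equivalent to its principal part, the germ of $\CC$ at $b$ is analytically equivalent to $F$. Now $F$ is weighted homogeneous with weights $({\rm wt}\,x,{\rm wt}\,y)=(1,c)$ and weighted degree $N=cm$, i.e. of quasi-homogeneous type $(c,1;cm)$; in particular the singularity is quasi-homogeneous, whence $\tau_b=\mu_b$ by Reiffen's theorem. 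It remains to evaluate $\mu_b$, for which I would use either the Milnor number formula for an isolated weighted homogeneous plane singularity, $\mu=(N/p-1)(N/q-1)$ with $N=cm$, $p=1$, $q=c$, giving $\mu_b=(cm-1)(m-1)$; or, equivalently, the branch formula $\mu=2\delta-r+1$, where the germ has $r=m$ smooth branches and $\delta=c\binom{m}{2}$ because each pair meets with multiplicity $c$, so that $\mu_b=2c\binom{m}{2}-m+1=(m-1)(cm-1)$. Either way $\mu_b=\tau_b=(m-1)(cm-1)$, as asserted.

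The main obstacle is the local analysis of the members, not the final arithmetic. After normalising $D_1,D_2$ one only controls $g_2=(y-x^{c})\,v$ with $v$ a unit, and one must verify that the $m$ members entering $\CC$ are genuinely smooth at $b$, share the tangent $\{y=0\}$, and have pairwise contact exactly $c$, so that the union really is semi-quasi-homogeneous with principal part $\prod_i(y-\lambda_i x^{c})$. The genuinely delicate point for $c\geq 2$ is the single member at which the linear part degenerates: depending on $v$ it may be non-reduced at $b$ (and then automatically excluded) or reduced but carrying a branch not tangent to $\{y=0\}$, and in either case it must not be counted among the $m$ members for which the type $(c,1;cm)$ is claimed, since including it would alter the tangent data and hence the singularity. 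Making this exclusion precise — equivalently, confirming that the members entering $\CC$ are smooth at $b$ — is the crux; once it is in place, the passage to $F$ and the evaluation of $\mu_b=\tau_b$ are routine.
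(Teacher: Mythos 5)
Your local set-up agrees with the paper's (normalize $D_1:y=0$, $D_2:y=x^c$, check that the members entering $\CC$ are smooth at $b$ with pairwise contact exactly $c$ and distinct leading coefficients $\lambda_i$), and your two evaluations of $\mu_b$ are correct; indeed $\mu_b=2\delta-r+1=(m-1)(cm-1)$ already follows from the branch data, since the Milnor number is a topological invariant. The gap is the sentence ``since a semi-quasi-homogeneous germ is right-equivalent to its principal part, the germ of $\CC$ at $b$ is analytically equivalent to $F$''. That principle is false in general: by Arnold's normal form theorem, a semi-quasi-homogeneous germ is equivalent to $f_0+\sum c_ie_i$, where the $e_i$ are the monomial basis elements of the local algebra of the principal part $f_0$ of weighted degree exceeding $d$, and such monomials exist here as soon as the socle degree $2cm-2(c+1)$ exceeds $d=cm$, i.e.\ for every $m\geq 4$ when $c\geq 2$ (and for $m\geq 5$ when $c=1$). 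Your reduction retains only the branch data --- $m$ smooth branches with pairwise contact exactly $c$ --- and that data does not determine the analytic type: the paper's own discussion of ordinary points of multiplicity $m\geq 5$ (and the Question closing Section~4) presupposes germs with precisely this branch data for $c=1$ that are \emph{not} quasi-homogeneous and have $\tau_b<\mu_b$. So quasi-homogeneity and $\tau_b=\mu_b$ cannot be extracted from the leading terms of the $\phi_i$ alone; they genuinely use the pencil structure, which your argument discards after computing the $\lambda_i$.

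The paper's proof keeps that structure. It chooses coordinates with $u=g_1$, writes $g_2=u\,a(u,v)+v^cb(v)$ with $a(0,0)\neq 0$, $b(0)\neq 0$, and then sets $\tilde u=u\,a(u,v)$, $\tilde v^c=v^cb(v)$, so that $g_2$ becomes exactly $\tilde u+\tilde v^c$ and every member of the pencil acquires, up to a unit, a local equation which is an exact binomial $\al\tilde u+\be\tilde v^c$. The product of $m$ such binomials is exactly quasi-homogeneous of type $(c,1;cm)$ --- there are no higher-order terms left to remove, so no appeal to a removal-of-tail argument is needed. Relatedly, what you single out as ``the crux'' (excluding the one member whose linear part at $b$ degenerates) is the easy part: in the paper it is disposed of by the reducedness hypothesis, since that exceptional member has local equation $\tilde v^c$ up to a unit, hence is non-reduced for $c\geq 2$. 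The real crux is the simultaneous normalization of $g_1$ and $g_2$, which your proposal is missing.
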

\begin{proof}
Since this question is local around $b$, so we can work at the origin of $\C^2$.
The case $c=1$ is clear, so we consider from now on the case $c \geq 2$. Since $D_1$ is smooth at $b=0$, we can choose $u=g_1$ to be a local (analytic) coordinate at the origin. We complete it to a local coordinate system $(u,v)$ at the origin of $\C^2$. In this coordinate system we have $D_1:u=0$ by our construction.
The polynomial $g_2$ in the local coordinates $(u,v)$ has the form
\begin{equation}
\label{E2}
g_2(u,v)=u\cdot a(u,v)+ v^c \cdot b(v),
\end{equation}
where $a(u,v)$ (resp. $b(v)$) is a convergent power series in $(u,v)$ (resp. in $v$) such that $a(0,0) \ne 0$ and $b(0) \ne 0$. This follows from the conditions $i(D_1,D_2)_b=c\geq 2$ and $D_2$ smooth at $b$.
We change now the local coordinates at the origin of $\C^2$ such that  the new coordinates $(\tilde u,\tilde v)$ satisfy the equalities
\begin{equation}
\label{E3}
\tilde u=u\cdot a(u,v) \text{ and } \tilde v^c=v^c \cdot b(v).
\end{equation}
In these new coordinates, we see that
$D_1$ is given by $\tilde u=0$ and $D_2$ is given by $\tilde u+\tilde v^c=0$. It follows that the germ at $b$ of a member $D_1+tD_2$ in the pencil is given by the local equation
\begin{equation}
\label{E4}
g_t=\tilde u+ t(\tilde u+\tilde v^c)=(1+t)\tilde u+t \tilde v^c,
\end{equation}
where $t \ne -1$ since we assume that all the members $C_t$ are reduced curves. Hence the germ of the arrangement $\CC$ at $b$ has a local equation which is a product of $m$ terms of the form 
$$\al \tilde u+\be \tilde v^c,$$
with $\al \ne 0$. If we put the weights ${\rm wt}(\tilde u)=c$ and ${\rm wt}(\tilde v)=1$, we see that all these factors are quasi-homogeneous of type $(c,1;c)$. This completes the proof of our claim, since the formula for the Milnor number of a quasi-homogeneous isolated hypersurface singularity is well-known.
\end{proof}
\begin{ex}
Let us consider the arrangement of conics $\mathcal{C} = \{C_{1}, ..., C_{m}\} \subset \mathbb{P}^{2}_{\mathbb{C}}$, where each equation for $C_t$ can be written as $f_t=g_1 +t g_2$ with
$$g_1=x^2+y^2-yz \text{ and } g_2=(x+y)z.$$
In other words, the curves $C_t$ are members of a pencil of conics, with a base locus $B:g_1=g_2=0$ consisting of $4$ points. By the above results, the curve $\CC_m$, which is the union $C_1 \cup \ldots \cup C_m$, has ordinary quasi-homogeneous singularities. 
\end{ex}
If $p$ is an ordinary singular point of multiplicity  $m$, then it is clear that $\mu_{p} = (m-1)^{2}$, since the corresponding singularity is semi-quasi-homogeneous. To determine the corresponding Tjurina number is much more complicated.
Based on many experiments run with use of \verb{Singular{, if $p$ is an ordinary intersection point of $m$ smooth conics, then we can deduce the following pattern:
$$ \tau_{p} \geq (m-1)^{2} - m + 4.$$
This leads us to the following question.
\begin{question}
Let $p$ be an ordinary singularity of smooth plane conics of multiplicity $m\geq 5$. Then
$$\mu_{p} - \tau_{p} \in \{0, m-4\}.$$
\end{question}

\section*{Acknowledgments}

The first author was partially supported by the Romanian Ministry of Research and Innovation, CNCS - UEFISCDI, Grant \textbf{PN-III-P4-ID-PCE-2020-0029}, within PNCDI III.

The third author was partially supported by the National Science Center (Poland) Sonata Grant Nr \textbf{2018/31/D/ST1/00177}. 

The third author would like to thank Professor Evgenii Shustin and Bernd Schober for fruitful discussions devoted to quasi-homogeneous singularities. The second and the third author would like to thank Marco Golla and Bernd Schober for all comments and remarks that allowed to improve the note. 

Finally, we would like to thank an anonymous referee for all valuable comments and suggestions that allowed to improve the paper.

\end{document}